\newcommand*\ceq{\mathop{\vcenter{\hbox{:}}{=}}}
\newcommand\doubleplus{\ensuremath{\mathbin{+\mkern-10mu+}}}
\newcommand*\reduceop{\mathop{\mathrlap{\bigcirc}\downarrow}}
\newcommand{\algrule}[1][.2pt]{\par\vskip.01\baselineskip\hrule height #1\par\vskip.01\baselineskip}
\def\orcidID#1{\unskip$^{[#1]}$}
\def\letter{$^{\textrm{(\Letter)}}$}
\begin{document}
\mainmatter              
\title{Visualizing Multidimensional Linear Programming Problems}
\titlerunning{Visualizing LP Problems}  
%
\author{Nikolay~A.~Olkhovsky \and Leonid~B.~Sokolinsky\letter\orcidID{0000-0001-9997-3918}}
\authorrunning{N.A. Olkhovsky and L.B. Sokolinsky} 
%
\tocauthor{Nikolay A. Olkhovsky and Leonid B. Sokolinsky}
\institute{
	South Ural State University {(National Research University)} \\
	76, Lenin prospekt, Chelyabinsk, Russia, 454080\\
	\email{olkhovskiiNA@susu.ru}, \email{leonid.sokolinsky@susu.ru}
}

\maketitle              

\begin{abstract}
The article proposes an n-dimensional mathematical model of the visual representation of a linear programming problem. This model makes it possible to use artificial neural networks to solve multidimensional linear optimization problems, the feasible region of which is a bounded non-empty set. To visualize the linear programming problem, an objective hyperplane is introduced, the orientation of which is determined by the gradient of the linear objective function: the gradient is the normal to the objective hyperplane. In the case of searching a maximum, the objective hyperplane is positioned in such a way that the value of the objective function at all its points exceeds the value of the objective function at all points of the feasible region, which is a bounded convex polytope. For an arbitrary point of the objective hyperplane, the objective projection onto the polytope is determined: the closer the objective projection point is to the objective hyperplane, the greater the value of the objective function at this point. Based on the objective hyperplane, a finite regular set of points is constructed, called the receptive field. Using objective projections, an image of a polytope is constructed. This image includes the distances from the receptive points to the corresponding points of the polytope surface. Based on the proposed model, parallel algorithms for visualizing a linear programming problem are constructed. An analytical estimation of its scalability is performed. Information about the software implementation and the results of large-scale computational experiments confirming the efficiency of the proposed approaches are presented.
\keywords{Linear programming $\cdot$ Multydimensional visualization $\cdot$ Mathematical model $\cdot$ Parallel algorithm $\cdot$ BSF-skeleton}
\end{abstract}
\section{Introduction}
The rapid development of big data technologies~\cite{Jagadish2014,Hartung2018} has led to the emergence of mathematical optimization models in the form of large-scale linear programming (LP) problems~\cite{Sokolinskaya2017a}. Such problems arise in industry, economics, logistics, statistics, quantum physics, and other fields~\cite{Chung2015,Gondzio2014,Sodhi2005,Brogaard2014,Sokolinskaya2017b}. In many cases, the conventional software is not able to handle such large-scale LP problems in an acceptable time~\cite{Bixby2002}. At the same time, in the nearest future, exascale supercomputers will appear~\cite{Dongarra2019},  which are potentially capable of solving such problems. In accordance with this, the issue of developing new effective methods for solving large-scale LP problems using exascale supercomputing systems is urgent.

Until now, the class of algorithms proposed and developed by Dantzig based on the simplex method~\cite{Dantzig1998} is one of the most common ways to solve the LP problems. The simplex method is effective for solving a large class of LP problems. However, the simplex method has some fundamental features that limit its applicability to large LP problems. First, in the worst case, the simplex method traverses all the vertices of the simplex, which results in exponential time complexity~\cite{Zadeh1973}. Second, in most cases, the simplex method successfully solves LP problems containing up to 50,000 variables. However, a loss of precision is observed when the simplex method is used for solving large LP problems. Such a loss of precision cannot be compensated even by applying such computational intensive procedures as ``affine scaling'' or ``iterative refinement''~\cite{Tolla2014}. Third, the simplex method does not scale well on multiprocessor systems with distributed memory. A lot of attempts to parallelize the simplex method were made, but they all failed~\cite{Mamalis2015}. In~\cite{Karmarkar1984}, Karmarkar proposed the inner point method having polynomial time complexity in all cases. This method effectively solves problems with millions of variables and millions of constraints. Unlike the simplex method, the inner point method is self-correcting. Therefore, it is robust to the loss of precision in computations. The drawbacks of the interior point method are as follows. First, the interior point method requires careful tuning of its parameters. Second, this method needs a known point that belongs to the feasible region of the LP problem to start calculations. Finding such an interior point can be reduced to solving an additional LP problem. An alternative is the iterative projection-type methods~\cite{Sokolinskaya2018,Sokolinskaya2018a,Sokolinsky2020b}, which are also self-correcting. Third, like the simplex method, the inner point method does not scale well on multiprocessor systems with distributed memory. Several attempts at effective parallelization for particular cases have been made  (see, for example,~\cite{Hafsteinsson1994,Karypis1994}). However, it was not possible to make efficient parallelization for the general case. In accordance with this, the research directions related to the development of new scalable methods for solving LP problems are urgent.

A possible efficient alternative to the conventional  methods of LP is optimization methods based on neural network models. Artificial neural networks~\cite{Prieto2016,Schmidhuber2015} are one of the most promising and rapidly developing areas of modern information technology. Neural networks are a universal tool capable of solving problems in almost all areas. The most impressive successes have been achieved in image recognition and analysis using convolutional neural networks~\cite{LeCun2015}. However, in the scientific periodicals, there are almost no works devoted to the use of convolutional neural networks for solving linear optimization problems~\cite{Lachhwani2020}. The reason is that convolutional neural networks are focused on image processing, but there are no works on the visual representation of multidimensional linear programming problems in the scientific literature. Thus, the issue of developing new neural network models and methods focused on linear optimization remains open.

In this paper, we have tried to develop a n-dimensional mathematical model of the visual representation of the LP problem. This model allows us to employ the technique of artificial neural networks to solve multidimensional linear optimization problems, the feasible region of which is a bounded nonempty set. The visualization method based on the described model has a high computational complexity. For this reason, we propose its implementation as a parallel algorithm designed for cluster computing systems. The rest of the paper is organized as follows. Section~\ref{sec:model} is devoted to the design of a mathematical model of the visual representation of multidimensional LP problems. Section~\ref{sec:parallel_algorithm} describes the implementation of the proposed visualization method as a parallel algorithm and provides an analytical estimation of its scalability. Section~\ref{sec:computational_experiments} presents information about the software implementation of the described parallel algorithm and discusses the results of large-scale computational experiments on a cluster computing system. Section~\ref{sec:conclusion} summarizes the obtained results and provides directions for further research.
\section{Mathematical Model of the LP Visual Representation}\label{sec:model}
The linear optimization problem can be stated as follows
\begin{equation}\label{eq:LP-problem}
	\bar x = \arg\max \left\{ \left\langle c,x \right\rangle \middle| Ax \leqslant b, x \in \mathbb{R}^n  \right\},
\end{equation}	
where $c,b \in \mathbb{R}^n$, $A \in \mathbb{R}^{m\times n}$, and $c\ne \mathbf{0}$. Here and below, $\left\langle \cdot\;,\cdot\right\rangle $ stands for the dot product of vectors. We assume that 	constraint $x \geqslant \mathbf{0}$ is also included in the system $Ax \leqslant b$ in the form of the following inequalities:
\begin{equation*}
	\begin{array}{*{20}{c}}
	{ - {x_1}}& + &0& + & \cdots & \cdots & \cdots & + &0& \leqslant &{0;} \\
	0& - &{{x_2}}& + &0& + & \cdots & + &0& \leqslant &{0;} \\
	\cdots & \cdots & \cdots & \cdots & \cdots & \cdots & \cdots & \cdots & \cdots & \cdots & \cdots  \\
	0& + & \cdots & \cdots & \cdots & + &0& - &{{x_n}}& \leqslant &{0.}
	\end{array}
\end{equation*}
The vector $c$ is the gradient of the linear objective function
\begin{equation}\label{eq:objective_function}
	f(x)=c_1 x_1+\ldots+c_n x_n.
\end{equation}
Let $M$ denote the feasible region of problem (1):
\begin{equation}\label{eq:M}
	M = \left\{x \in \mathbb{R}^n\middle| Ax \leqslant b\right\}.
\end{equation}
We assume from now on that $M$ is a nonempty bounded set. This means that $M$ is a convex closed polytope in space~$\mathbb{R}^n$, and the solution set of problem (1) is not empty.

Let $\tilde a_i\in\mathbb{R}^n$ be a vector formed by the elements of the $i$th row of the matrix~$A$. Then, the matrix inequality $Ax \leqslant b$ is represented as a system of the inequalities
\begin{equation}\label{eq:inequalities}
	\left\langle\tilde a_i,x\right\rangle \leqslant b_i,i=1,\ldots,m.
\end{equation}
We assume from now on that 
\begin{equation}\label{eq:a_i_ne_0}
	\tilde a_i\ne \mathbf{0}.
\end{equation}
for all $i=1,\ldots,m$. Let us denote by $H_i$ the hyperplane defined by the equation
\begin{equation}\label{eq:H_i}
	\left\langle \tilde a_i,x \right\rangle = b_i \; (1\leqslant i \leqslant m).
\end{equation}
Thus, 
\begin{equation}\label{eq:hyperplane}
	H_i=\left\lbrace x\in\mathbb{R}^n \middle|\left\langle \tilde a_i,x \right\rangle = b_i \right\rbrace.
\end{equation}
\begin{definition}\label{def:hyperplane_and_halfspace}
	The half-space $H_{i}^+$ generated by the hyperplane $H_i$ is the half-space defined by the equation
	\begin{equation}\label{eq:halfspace}
		H_{i}^+=\left\lbrace x\in\mathbb{R}^n \middle|\left\langle \tilde a_i,x \right\rangle \leqslant b_i \right\rbrace.
	\end{equation}
\end{definition}
From now on, we assume that problem~\eqref{eq:LP-problem} is non-degenerate, that is
\begin{equation}\label{eq:Hi<>Hj}
	\forall i\ne j\,:H_i\ne H_j\,\left( i,j\in \left\lbrace 1,\ldots,m\right\rbrace \right) .
\end{equation}
\begin{definition}\label{def:recessive} 
	The half-space $H_{i}^+$ generated by the hyperplane $H_i$ is recessive with respect to vector~$c$ if
	\begin{equation}\label{eq:recessive_halfspace}
		\forall x\in H_i,\forall \lambda\in\mathbb{R}_{> 0}\,:x-\lambda c\in H_i^+ \wedge x-\lambda c\notin H_i.
	\end{equation}
\end{definition}
In other words, the ray coming from the hyperplane $H_{i}$ in the direction opposite to the vector $c$ lies completely in $H_{i}^+$, but not in $H_{i}$.
\begin{proposition}\label{prp:recessive_halfspace_condition}
	The necessary and sufficient condition for the recessivity of the half-space $H_{i}^+$ with respect to the vector $c$ is the condition
	\begin{equation}\label{eq:recessive_halfspace_condition}
		\left\langle \tilde a_i,c\right\rangle>0.
	\end{equation}
\end{proposition}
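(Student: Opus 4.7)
The plan is to translate the geometric recessivity condition into an algebraic one by evaluating the functional $x\mapsto\langle\tilde a_i,x\rangle$ along the ray $x-\lambda c$ emanating from a point on $H_i$. Since $H_i$ is the level set $\langle\tilde a_i,x\rangle=b_i$ and $H_i^+$ is the sublevel set $\langle\tilde a_i,x\rangle\leqslant b_i$, the entire statement can be reduced to a one-line linearity computation.

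First I would fix an arbitrary $x\in H_i$ and $\lambda>0$ and compute
\begin{equation*}
\langle\tilde a_i,x-\lambda c\rangle=\langle\tilde a_i,x\rangle-\lambda\langle\tilde a_i,c\rangle=b_i-\lambda\langle\tilde a_i,c\rangle,
\end{equation*}
using that $x\in H_i$ for the last equality. Then the conjunction $x-\lambda c\in H_i^+\wedge x-\lambda c\notin H_i$ from Definition~\ref{def:recessive} becomes the single strict inequality $b_i-\lambda\langle\tilde a_i,c\rangle<b_i$, i.e.\ $\lambda\langle\tilde a_i,c\rangle>0$.

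Sufficiency is then immediate: if $\langle\tilde a_i,c\rangle>0$, then for every $\lambda>0$ we have $\lambda\langle\tilde a_i,c\rangle>0$, so the displayed strict inequality holds for every $x\in H_i$ and every $\lambda>0$, giving recessivity. For necessity, I would argue by contraposition: if $\langle\tilde a_i,c\rangle\leqslant 0$, pick any $x\in H_i$ (such a point exists because $\tilde a_i\neq\mathbf{0}$ by assumption~\eqref{eq:a_i_ne_0}, so the affine equation $\langle\tilde a_i,x\rangle=b_i$ is solvable) and any $\lambda>0$; then $\lambda\langle\tilde a_i,c\rangle\leqslant 0$, so $\langle\tilde a_i,x-\lambda c\rangle\geqslant b_i$, which either places $x-\lambda c$ back on $H_i$ (if equality holds) or strictly outside $H_i^+$ (if $\langle\tilde a_i,c\rangle<0$), contradicting recessivity in either case.

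There is no real obstacle here; the only subtlety is making sure $H_i$ is nonempty so that the universally quantified recessivity condition actually has content for the necessity direction, and this is guaranteed by the standing assumption $\tilde a_i\neq\mathbf{0}$. I would keep the proof to a few lines, emphasizing the identity $\langle\tilde a_i,x-\lambda c\rangle=b_i-\lambda\langle\tilde a_i,c\rangle$ as the single mechanism driving both implications.
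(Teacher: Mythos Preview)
Your proof is correct and uses essentially the same approach as the paper: both reduce the statement to the linearity identity $\langle\tilde a_i,x-\lambda c\rangle=b_i-\lambda\langle\tilde a_i,c\rangle$ for $x\in H_i$. The only cosmetic differences are that the paper instantiates the specific point $x=b_i\tilde a_i/\|\tilde a_i\|^2\in H_i$ for the necessity direction and argues sufficiency by contradiction, whereas you work with an arbitrary $x\in H_i$ and prove sufficiency directly.
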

\begin{proof}
	Let us prove the necessity first. Let the condition~\eqref{eq:recessive_halfspace} hold. Equation~\eqref{eq:hyperplane} implies
	\begin{equation}\label{eq:x_in_H_i}
		x=\frac{b_i\tilde a_i}{\|\tilde a_i\|^2}\in H_i.
	\end{equation}
By virtue of~\eqref{eq:a_i_ne_0},
	\begin{equation}\label{eq:lambda_in_R_>0}
	\lambda=\frac{1}{\|\tilde a_i\|^2}\in\mathbb{R}_{> 0}.
\end{equation}
Comparing~\eqref{eq:recessive_halfspace} with~\eqref{eq:x_in_H_i} and \eqref{eq:lambda_in_R_>0}, we obtain
\begin{eqnarray*}
	&&\frac{b_i\tilde a_i}{\|\tilde a_i\|^2}-\frac{1}{\|\tilde a_i\|^2}c \in H_i^+;\\
	&&\frac{b_i\tilde a_i}{\|\tilde a_i\|^2}-\frac{1}{\|\tilde a_i\|^2}c \notin H_i.\\		
\end{eqnarray*}
In view of~\eqref{eq:hyperplane} and~\eqref{eq:halfspace}, this implies
	\begin{eqnarray}\label{eq:inequality}
	&\left\langle\tilde a_i, \frac{b_i\tilde a_i}{\|\tilde a_i\|^2}-\frac{1}{\|\tilde a_i\|^2}c\right\rangle <b_i.
\end{eqnarray}
Using simple algebraic transformations of inequality~\eqref{eq:inequality}, we obtain~\eqref{eq:recessive_halfspace_condition}. Thus, the necessity is proved.

Let us prove the sufficiency by contradiction. Assume that~\eqref{eq:recessive_halfspace_condition} holds, and there are $x\in H_i$ and $\lambda>0$ such that 
	\begin{equation*}
	x-\lambda c\notin H_i^+ \vee x-\lambda c\in H_i.
\end{equation*}
In accordance with~\eqref{eq:hyperplane} and \eqref{eq:halfspace}, this implies
	\begin{equation*}
	\left\langle \tilde a_i, x-\lambda c\right\rangle \geqslant b_i
\end{equation*}
that is equivalent to
	\begin{equation*}
	\left\langle \tilde a_i, x\right\rangle - \lambda\left\langle \tilde a_i,  c\right\rangle \geqslant b_i.
\end{equation*}
Since $\lambda>0$, it follows from~\eqref{eq:recessive_halfspace_condition} that
	\begin{equation*}
	\left\langle \tilde a_i, x\right\rangle >b_i,
\end{equation*}
but this contradicts our assumption that $x\in H_i$. \qed
\end{proof}	
\begin{definition}\label{def:object_halfspace}
	Fix a point $z\in\mathbb{R}^n$ such that the half-space 
	\begin{equation}\label{eq:object_halfspace}
		H_c^+=\left\{x\in\mathbb{R}^n \middle| \left\langle c,x-z\right\rangle \leqslant 0 \right\}
	\end{equation}
	includes the polytope $M$:
	\begin{equation*}
		M \subset H_c^+.
	\end{equation*}
	In this case, we will call the half-space $H_c^+$ the objective half-space, and the hyperplane $H_c$, defined by the equation
		\begin{equation}\label{eq:H_c}
		H_c=\left\{x\in\mathbb{R}^n \middle| \left\langle c,x-z\right\rangle = 0 \right\},
	\end{equation}
	the objective hyperplane.
\end{definition}
Denote by $\pi_c(x)$ the \emph{orthogonal projection} of point $x$ onto the objective hyperplane $H_c$:
\begin{equation}\label{eq:ortogonal_projection}
	\pi_c(x)=x-\frac{\left\langle c,x-z\right\rangle}{\|c\|^2}c.
\end{equation}
Here, $\|\cdot\|$ stands for the Euclidean norm. Define \emph{distance} $\rho_c(x)$ from $x\in H_c^+$ to the objective hyperplane ~$H_c$ as follows:
\begin{equation}\label{eq:rho_c}
	\rho_c(x)=\|\pi_c(x)-x\|.
\end{equation}
Comparing~\eqref{eq:object_halfspace}, \eqref{eq:ortogonal_projection} and \eqref{eq:rho_c}, we find that, in this case, the distance $\rho_c(x)$ can be calculated as follows:
\begin{equation}\label{eq:distance}
	\rho_c(x)=\frac{\left\langle c,z-x\right\rangle}{\|c\|}.
\end{equation}
The following Proposition~\ref{prp:coloring} holds.
\begin{proposition}\label{prp:coloring} 
	For all $x,y \in H_c^+$,
	\[\rho_c(x) \leqslant \rho_c(y) \Leftrightarrow \left\langle c,x\right\rangle \geqslant \left\langle c,y\right\rangle.\]
\end{proposition}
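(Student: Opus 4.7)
My plan is to derive the equivalence directly from the closed-form formula for $\rho_c$ already established in equation~\eqref{eq:distance}, rather than going back to the orthogonal projection definition. Since the statement only concerns points in $H_c^+$, formula~\eqref{eq:distance} applies to both $x$ and $y$, and the proof reduces to a short chain of equivalences between real inequalities.

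Concretely, I would first note that $c \neq \mathbf{0}$ by the hypothesis following~\eqref{eq:LP-problem}, so $\|c\| > 0$ and dividing by $\|c\|$ preserves inequalities. Applying~\eqref{eq:distance} to both sides, the inequality $\rho_c(x) \leqslant \rho_c(y)$ becomes
\begin{equation*}
    \frac{\langle c, z - x\rangle}{\|c\|} \leqslant \frac{\langle c, z - y\rangle}{\|c\|},
\end{equation*}
which, after clearing the positive denominator and using bilinearity of the dot product to expand $\langle c, z-x\rangle = \langle c, z\rangle - \langle c, x\rangle$ (and similarly for $y$), is equivalent to $-\langle c, x\rangle \leqslant -\langle c, y\rangle$, i.e. to $\langle c, x\rangle \geqslant \langle c, y\rangle$. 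Each step is reversible, so the biconditional follows.

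I do not foresee any real obstacle: the substance of the proposition is already packed into formula~\eqref{eq:distance}, and the remaining argument is purely algebraic. The only thing worth flagging explicitly is that $\|c\| > 0$, to justify that the scaling by $1/\|c\|$ is order-preserving; beyond that, the proof is essentially a one-line computation dressed up as a chain of iff's.
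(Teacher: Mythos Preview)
Your proposal is correct and follows essentially the same approach as the paper: both apply formula~\eqref{eq:distance} to $x$ and $y$ and then run through the same chain of equivalences (clear the positive factor $\|c\|$, expand by bilinearity, cancel $\langle c,z\rangle$, and flip the sign). Your explicit remark that $\|c\|>0$ is a small improvement in rigor over the paper, which uses this silently.
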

\begin{proof}
	Equation~\eqref{eq:distance} implies that
	\begin{eqnarray*}
		&&\rho_c(x) \leqslant \rho_c(y) \Leftrightarrow \frac{\left\langle c,z-x\right\rangle}{\|c\|}  \leqslant \frac{\left\langle c,z-y\right\rangle}{\|c\|} \\
		&& \Leftrightarrow \left\langle c,z-x\right\rangle \leqslant \left\langle c,z-y\right\rangle \\
		&& \Leftrightarrow \left\langle c,z\right\rangle+\left\langle c,-x\right\rangle \leqslant \left\langle c,z\right\rangle+\left\langle c,-y\right\rangle \\
		&& \Leftrightarrow \left\langle c,-x\right\rangle \leqslant \left\langle c,-y\right\rangle \\ 
		&& \Leftrightarrow \left\langle c,x\right\rangle \geqslant \left\langle c,y\right\rangle.
	\end{eqnarray*}  \qed
\end{proof}
Proposition~\ref{prp:coloring} says that problem~\eqref{eq:LP-problem} is equivalent to the following problem:
\begin{equation}\label{eq:coloring_problem}
	\bar x = \arg\min \left\{ \rho_c(x) \middle| x \in M  \right\}.
\end{equation}
\begin{definition}\label{def:objective_projection_on_halfspace}
	Let the half-space $H_i^+$ be recessive with respect to the vector ~$c$. The objective projection $\gamma_i(x)$ of a point $x\in\mathbb{R}^n$ onto the recessive half-space $H_i^+$ is a point defined by the equation
	\begin{equation}\label{eq:objective_projection_on_halfspace}
	\gamma_i(x)= x-\sigma_i(x)c,
	\end{equation}
	where
	\begin{equation*}
		\sigma_i(x)= \min\left\lbrace \sigma \in\mathbb{R}_{\geqslant 0}\; \middle| \; x-\sigma c \in H_i^+ \right\rbrace.
	\end{equation*}
\end{definition}
Examples of objective projections in $\mathbb{R}^2$ are shown in Figure~\ref{Fig01_Objective_projection_to_H_i}.
\begin{figure}[h]
	\centering
	\includegraphics[height=4.5 cm]{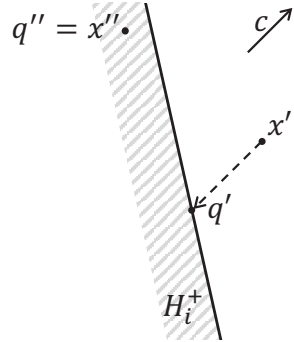}
	\caption{Objective projections in space $\mathbb{R}^2$: $\gamma_i(x')=q'$; $\gamma_i(x'')=q''=x''$.}
	\label{Fig01_Objective_projection_to_H_i}
\end{figure}

\noindent The following Proposition~\ref{prp:objective_projection_on_halfspace} provides an equation for calculating the objective projection onto a half-space that is recessive with respect to the vector $c$.
\begin{proposition}\label{prp:objective_projection_on_halfspace} 
	Let half-space $H_i^+$ defined by the inequality
	\begin{equation}\label{eq:H_i^+}
		\left\langle \tilde a_i,x\right\rangle \leqslant b_i
	\end{equation}
	be recessive with respect to the vector $c$. Let
	\begin{equation}\label{eq:g=pi_c(u)}
		g \notin H_i^+.
	\end{equation}
	Then,
	\begin{equation}\label{eq:gamma_i}
		\gamma_i(g)=g-\frac{\left\langle \tilde a_i,g\right\rangle-b_i}{\left\langle \tilde a_i,c \right\rangle }c.
	\end{equation}
\end{proposition}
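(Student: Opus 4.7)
The plan is to unpack the definition of $\gamma_i(g)$ and solve for the optimal scalar $\sigma_i(g)$ directly. By Definition~\ref{def:objective_projection_on_halfspace}, $\gamma_i(g) = g - \sigma_i(g)c$, where $\sigma_i(g)$ is the smallest nonnegative $\sigma$ for which the point $g - \sigma c$ lies in $H_i^+$. Thus the whole proof reduces to computing this minimum, after which the stated formula for $\gamma_i(g)$ follows by substitution.

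First I would rewrite the membership condition $g - \sigma c \in H_i^+$ using~\eqref{eq:halfspace}: it is equivalent to $\langle \tilde a_i, g\rangle - \sigma\langle \tilde a_i, c\rangle \leqslant b_i$, i.e. to
\begin{equation*}
    \sigma\langle \tilde a_i, c\rangle \;\geqslant\; \langle \tilde a_i, g\rangle - b_i.
\end{equation*}
At this point I would invoke Proposition~\ref{prp:recessive_halfspace_condition}: since $H_i^+$ is recessive with respect to $c$, we have $\langle \tilde a_i, c\rangle > 0$, so dividing is legitimate and the inequality becomes $\sigma \geqslant (\langle \tilde a_i, g\rangle - b_i)/\langle \tilde a_i, c\rangle$. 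Hence the infimum of admissible $\sigma$ is exactly this ratio, provided it is nonnegative.

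Next I would verify nonnegativity using the hypothesis $g \notin H_i^+$, which by~\eqref{eq:H_i^+} means $\langle \tilde a_i, g\rangle > b_i$. Combined with $\langle \tilde a_i, c\rangle > 0$, the candidate value is strictly positive, so it lies in $\mathbb{R}_{\geqslant 0}$, and moreover the infimum is attained (the constraint is a closed half-space condition linear in $\sigma$). Therefore
\begin{equation*}
    \sigma_i(g) = \frac{\langle \tilde a_i, g\rangle - b_i}{\langle \tilde a_i, c\rangle}.
\end{equation*}
Substituting this into $\gamma_i(g) = g - \sigma_i(g)c$ yields~\eqref{eq:gamma_i}.

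The only subtle point, and essentially the main obstacle, is justifying that the minimum over $\sigma \in \mathbb{R}_{\geqslant 0}$ is actually attained at the boundary value computed above rather than at $\sigma = 0$; this is handled precisely by the assumption $g \notin H_i^+$, which rules out the trivial case and forces the candidate value to be positive. Everything else is routine algebra based on the already-established recessivity characterization.
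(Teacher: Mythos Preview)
Your proof is correct and follows essentially the same approach as the paper: both reduce the problem to showing $\sigma_i(g) = (\langle \tilde a_i, g\rangle - b_i)/\langle \tilde a_i, c\rangle$ by intersecting the ray $g - \sigma c$ with the boundary hyperplane~$H_i$. Your presentation is slightly more streamlined, directly characterizing the feasible set $\{\sigma \geqslant 0 : g - \sigma c \in H_i^+\}$ as a closed half-line and reading off the minimum, whereas the paper computes the intersection point geometrically and then verifies minimality by contradiction; the content is the same.
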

\begin{proof}
	According to definition~\ref{def:objective_projection_on_halfspace}, we have
	\begin{equation*}
		\gamma_i(g)=g-\sigma_i(g)c,
	\end{equation*}
	where
	\begin{equation*}
		\sigma_i(x)= \min\left\lbrace \sigma \in\mathbb{R}_{\geqslant 0}\; \middle| \; x-\sigma c \in H_i^+ \right\rbrace.
	\end{equation*}	
	Thus, we need to prove that 
	\begin{equation}\label{eq:lambda}
		\frac{\left\langle \tilde a_i,g\right\rangle-b_i}{\left\langle \tilde a_i,c \right\rangle}=\min\left\lbrace \sigma \in\mathbb{R}_{\geqslant 0}\; \middle| \; x-\sigma c \in H_i^+ \right\rbrace.
	\end{equation}
	Consider the strait line $L$ defined by the parametric equation
	\begin{equation*}
		L=\left\lbrace g+\tau c\middle| \tau\in \mathbb{R} \right\rbrace.
	\end{equation*}
	Let the point $q$ be the intersection of the line $L$ with the hyperplane $H_i$:
	\begin{equation}\label{eq:q_in_L_cap_H_i}
		q=L\cap H_i.
	\end{equation}
	Then, $q$ must satisfy the equation
	\begin{equation}\label{eq:q=g+tau'c}
		q=g+\tau'c
	\end{equation}
	for some $\tau'\in\mathbb{R}$. Substitute the right side of equation~\eqref{eq:q=g+tau'c} into equation~\eqref{eq:H_i} instead of~$x$:
	\begin{equation*}
		\left\langle \tilde a_i,g+\tau'c\right\rangle = b_i.
	\end{equation*}
	It follows that
	\begin{eqnarray}
		&&\left\langle \tilde a_i,g\right\rangle + \tau'\left\langle \tilde a_i,c\right\rangle = b_i,\nonumber\\
		&&\tau'= \frac{b_i-\left\langle \tilde a_i,g\right\rangle}{\left\langle \tilde a_i,c\right\rangle}.\label{eq:tau'}
	\end{eqnarray}
	Substituting the right side of equation~\eqref{eq:tau'} into equation~\eqref{eq:q=g+tau'c} instead of~$\tau'$, we obtain
		\begin{equation*}
		q=g+\frac{b_i-\left\langle \tilde a_i,g\right\rangle }{\left\langle \tilde a_i,c \right\rangle }c,
	\end{equation*}
	which is equivalent to 
	\begin{equation}\label{eq:q}
		q=g-\frac{\left\langle \tilde a_i,g\right\rangle-b_i}{\left\langle \tilde a_i,c \right\rangle }c.
	\end{equation}
	Since, according to~\eqref{eq:q_in_L_cap_H_i}, $q\in H_i$, equation~\eqref{eq:lambda} will hold if
	\begin{equation}\label{eq:sigma}
		\forall\sigma \in \mathbb{R}_{>0} : \sigma < \frac{\left\langle \tilde a_i,g\right\rangle-b_i}{\left\langle \tilde a_i,c \right\rangle } \Rightarrow g-\sigma c \notin H_i^+
	\end{equation}
	holds. Assume the opposite, that is, there exist $\sigma'>0$ such that
		\begin{equation}\label{eq:sigma'<}
		\sigma' < \frac{\left\langle \tilde a_i,g\right\rangle-b_i}{\left\langle \tilde a_i,c \right\rangle }
	\end{equation}
	and
	\begin{equation}\label{eq:g}
		g-\sigma' c \in H_i^+.
	\end{equation}
	Then, it follows from~\eqref{eq:H_i^+} and~\eqref{eq:g} that
	\begin{equation*}
		\left\langle \tilde a_i,g-\sigma'c \right\rangle \leqslant b_i. 	
	\end{equation*}
	This is equivalent to
		\begin{equation}\label{eq:33}
		\left\langle \tilde a_i,g \right\rangle - b_i \leqslant \sigma' \left\langle \tilde a_i,c \right\rangle.
	\end{equation}
	Proposition~\ref{prp:recessive_halfspace_condition} implies that $\left\langle \tilde a_i,c \right\rangle>0$. Hence, equation~\eqref{eq:33} is equivalent to
		\begin{equation*}
		\sigma'\geqslant\frac{\left\langle \tilde a_i,g\right\rangle-b_i}{\left\langle \tilde a_i,c \right\rangle }.
	\end{equation*}
	Thus, we have a contradiction with~\eqref{eq:sigma'<}. \qed
\end{proof}
\begin{definition}\label{def:objective_projection_on_M}
	Let $g\in H_c$. The \emph{objective projection}~$\gamma_M(g)$ of the point~$g$ onto the polytope~$M$ is the point defined by the following equation:
	\begin{equation}\label{eq:objective_projection_on_halfspace}
		\gamma_M(g)=g-\sigma_M(g)c,
	\end{equation}
	where
	\begin{equation*}
	\sigma_M(g)=\min\left\lbrace \sigma\in\mathbb{R}_{\geqslant 0}\middle| g-\sigma c \in M \right\rbrace.
	\end{equation*}
	If
	\begin{equation*}
		\neg\exists\; \sigma\in\mathbb{R}_{\geqslant 0}:g-\sigma c \in M,
	\end{equation*}
	then we set~$\gamma_M(g)=\vec \infty$, where $\vec\infty$ stands for a point that is infinitely far from the polytope~$M$.
\end{definition}
Examples of objective projections onto polytope~$M$ in~$\mathbb{R}^2$ are shown in Figure~\ref{Fig02_Objective_projection_to_M}.
\begin{figure}[h]
	\centering
	\includegraphics[scale=0.8]{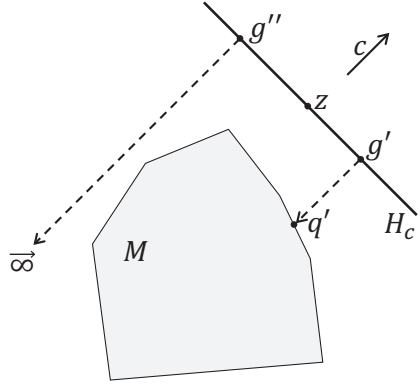}
	\caption{Objective projections onto polytope~$M$ in~$\mathbb{R}^2$: $\gamma_M(g')=q'$; $\gamma_M(g'')=\vec\infty$.}
	\label{Fig02_Objective_projection_to_M}
\end{figure}
\begin{definition}\label{def:receptive_field}
	A receptive field~$\mathfrak{G}(z,\eta,\delta)\subset H_c$ of density~$\delta\in\mathbb{R}_{>0}$ with center~$z\in H_c$ and rank~$\eta\in\mathbb{N}$ is a finite ordered set of points satisfying the following conditions:
		\begin{eqnarray}
		&&z\in \mathfrak{G}(z,\eta,\delta);\label{eq:z_in_G}\\
		&&\forall g\in \mathfrak{G}(z,\eta,\delta) : \|g-z\|\leqslant \eta\delta\sqrt{n};\label{eq:||g-z||<=eta*delta*sqrt(n)}\\
		&&\forall g',g''\in \mathfrak{G}(z,\eta,\delta) : g'\ne g'' \Rightarrow \|g'-g''\|\geqslant \delta;\label{eq:||g'-g''||<=delta}\\
		&&\forall g'\in \mathfrak{G}(z,\eta,\delta)\;\exists g''\in \mathfrak{G}(z,\eta,\delta):\|g'-g''\|=\delta ;\label{eq:||g'-g''||=delta}\\
		&&\forall x\in \operatorname{Co}(\mathfrak{G}(z,\eta,\delta))\;\exists g\in \mathfrak{G}(z,\eta,\delta):\|g-x\|\leqslant \tfrac{1}{2} \delta\sqrt{n}.\label{eq:||g-x||<=delta*sqrt(n)/2}
	\end{eqnarray}
	The points of the receptive field will be called receptive points.
\end{definition}
Here, $\operatorname{Co}(X)$ stands for the convex hull of a finite point set~\mbox{$X=\left\lbrace x^{(1)},\ldots,x^{(K)}\right\rbrace \subset\mathbb{R}^n$}:
\begin{equation*}
	{Co}(X)=\left\lbrace \sum_{i=1}^{K}\lambda_i x^{(i)}\middle| \lambda_i\in\mathbb{R}_{\geqslant 0},\sum_{i=1}^{K}\lambda_i=1\right\rbrace. 			
\end{equation*}
In definition~\ref{def:receptive_field}, condition~\eqref{eq:z_in_G} means that the center of the receptive field belongs to this field. Condition~\eqref{eq:||g-z||<=eta*delta*sqrt(n)} implies that the distance from the central point~$z$ to each point~$g$ of the receptive field does not exceed~$\eta\delta\sqrt{n}$. According to~\eqref{eq:||g'-g''||<=delta}, for any two different points~$g'\ne g''$ of the receptive field, the distance between them cannot be less than~$\delta$. Condition~~\eqref{eq:||g'-g''||=delta} says that for any point~$g'$ of a receptive field, there is a point~$g''$ in this field such that the distance between~$g'$ and $g''$ is equal to~$\delta$. Condition~\eqref{eq:||g-x||<=delta*sqrt(n)/2} implies that for any point~$x$ belonging to the convex hull of the receptive field, there is a point~$g$ in this field such that the distance between~$x$ and~$g$ does not exceed $\tfrac{1}{2} \delta\sqrt{n}$. An example of a receptive field in the space~$\mathbb{R}^3$ is presented in Figure~\ref{Fig03_Rceptive_field}.
\begin{figure}[t]
	\centering
	\includegraphics[scale=0.85]{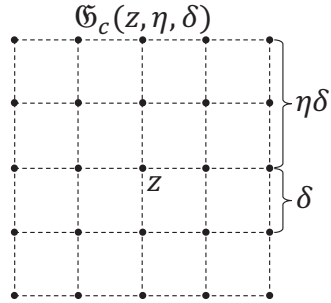}
	\caption{Receptive field in space~$\mathbb{R}^3$.}
	\label{Fig03_Rceptive_field}
\end{figure}

Let us describe a constructive method for building a receptive field. Without loss of generality, we assume that~$c_n\ne 0$. Consider the following set of vectors:
\begin{equation*}
	\begin{gathered}
		{c^{(0)}} = c = ({c_1},{c_2},{c_3},{c_4}, \ldots ,{c_{n - 1}},{c_n}); \hfill \\
		{c^{(1)}} = \left\{ \begin{gathered}
			\left( { - \tfrac{1}{{{c_1}}}\sum\nolimits_{i = 2}^n {c_i^2} ,{c_2},{c_3},{c_4}, \ldots ,{c_{n - 1}},{c_n}} \right),\; \text{if} \;{c_1} \ne 0; \hfill \\
			(1,0, \ldots ,0),\; \text{if} \;{c_1} = 0; \hfill \\
		\end{gathered}  \right. \hfill \\
		{c^{(2)}} = \left\{ \begin{gathered}
			\left( {0, - \tfrac{1}{{{c_2}}}\sum\nolimits_{i = 3}^n {c_i^2} ,{c_3},{c_4}, \ldots ,{c_{n - 1}},{c_n}} \right),\; \text{if} \;{c_2} \ne 0; \hfill \\
			(0,1,0, \ldots ,0),\; \text{if} \;{c_2} = 0; \hfill \\
		\end{gathered}  \right. \hfill \\
		{c^{(3)}} = \left\{ \begin{gathered}
			\left( {0,0, - \tfrac{1}{{{c_3}}}\sum\nolimits_{i = 4}^n {c_i^2} ,{c_4}, \ldots ,{c_{n - 1}},{c_n}} \right),\; \text{if} \;{c_3} \ne 0; \hfill \\
			(0,0,1,0, \ldots ,0),\; \text{if} \;{c_3} = 0; \hfill \\
		\end{gathered}  \right. \hfill \\
		\ldots  \ldots  \ldots  \ldots  \ldots  \ldots  \ldots  \ldots  \ldots  \ldots  \ldots  \ldots  \ldots  \ldots  \ldots  \ldots  \ldots  \ldots  \ldots  \hfill \\
		{c^{(n - 2)}} = \left\{ \begin{gathered}
			\left( {0, \ldots ,0, - \tfrac{1}{{{c_{n - 2}}}}\sum\nolimits_{i = n - 1}^n {c_i^2} ,{c_{n - 1}},{c_n}} \right),\; \text{if} \;{c_{n - 2}} \ne 0; \hfill \\
			(0, \ldots ,0,1,0,0),\; \text{if} \;{c_{n - 2}} = 0; \hfill \\
		\end{gathered}  \right. \hfill \\
		{c^{(n - 1)}} = \left\{ \begin{gathered}
			\left( {0, \ldots ,0, - \tfrac{{c_n^2}}{{{c_{n - 1}}}},{c_n}} \right),\; \text{if} \;{c_{n - 1}} \ne 0; \hfill \\
			(0, \ldots ,0,0,1,0),\; \text{if} \;{c_{n - 1}} = 0. \hfill \\
		\end{gathered}  \right. \hfill \\
	\end{gathered}
\end{equation*}
It is easy to see that
\begin{equation*}
	\forall i,j\in\{0,1,\ldots,n-1\}, i\ne j:\left\langle c^{(i)},c^{(j)}\right\rangle =0.
\end{equation*}
This means that $c_0,\ldots,c_{n-1}$ is an orthogonal basis in~$\mathbb{R}^n$. In particular,
\begin{equation}\label{eq:<c,c^i>=0}
	\forall i=1,\ldots,n-1:\left\langle c,c^{(i)}\right\rangle =0.
\end{equation}
The following Proposition~\ref{prp:S_c+z=H_} shows that the linear subspace of dimension~\mbox{$(n-1)$} generated by the orthogonal vectors $c_1,\ldots,c_{n-1}$ is a hyperplane parallel to the hyperplane~$H_c$.
\begin{proposition}\label{prp:S_c+z=H_c}
	Define the following linear subspace~$S_c$ of dimension~\mbox{$(n-1)$} in~$\mathbb{R}^n$:
	\begin{equation}\label{eq:S_c}
		S_c=\left\lbrace \sum_{i=1}^{n-1}\lambda_i c^{(i)}\middle|\lambda_i\in\mathbb{R} \right\rbrace.
	\end{equation}
	Then,
	\begin{equation}\label{eq:s+z_in_Hc}
		\forall s\in S_c:s+z\in H_c.
	\end{equation}
\end{proposition}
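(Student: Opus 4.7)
The plan is to unwind the definition of $H_c$ and reduce the claim to the orthogonality relation \eqref{eq:<c,c^i>=0} that was established just before the proposition. By equation \eqref{eq:H_c}, showing that $s+z \in H_c$ is equivalent to showing that $\langle c, (s+z)-z\rangle = 0$, which simplifies immediately to $\langle c, s\rangle = 0$. So the entire task reduces to checking that every element of $S_c$ is orthogonal to $c$.

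First I would fix an arbitrary $s\in S_c$ and, using the defining equation \eqref{eq:S_c}, write $s = \sum_{i=1}^{n-1}\lambda_i c^{(i)}$ for some scalars $\lambda_1,\ldots,\lambda_{n-1}\in\mathbb{R}$. Then I would apply bilinearity of the inner product to obtain
\begin{equation*}
\langle c, s\rangle = \Bigl\langle c, \sum_{i=1}^{n-1}\lambda_i c^{(i)}\Bigr\rangle = \sum_{i=1}^{n-1}\lambda_i \langle c, c^{(i)}\rangle.
\end{equation*}
Each inner product on the right vanishes by \eqref{eq:<c,c^i>=0}, so $\langle c, s\rangle = 0$. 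Substituting back, $\langle c, (s+z)-z\rangle = \langle c, s\rangle = 0$, which by \eqref{eq:H_c} means exactly that $s+z\in H_c$. Since $s$ was arbitrary, \eqref{eq:s+z_in_Hc} follows.

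There is no real obstacle here: the construction of the vectors $c^{(1)},\ldots,c^{(n-1)}$ was already carried out precisely so that they span the orthogonal complement of $c$, and the proposition is essentially a restatement of that fact translated by $z$. The only thing to be slightly careful about is invoking the correct definitional equation \eqref{eq:H_c} for $H_c$ (an affine hyperplane through $z$ with normal $c$) rather than the linear subspace condition, so that the translation by $z$ is handled cleanly.
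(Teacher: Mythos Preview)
Your proof is correct and follows essentially the same line as the paper's own argument: write $s$ as a linear combination of the $c^{(i)}$, expand $\langle c,(s+z)-z\rangle$ by linearity, invoke \eqref{eq:<c,c^i>=0} to get zero, and conclude via \eqref{eq:H_c}. There is no meaningful difference in approach.
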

\begin{proof}
	Let $s\in S_c$, that is
	\begin{equation*}
		s=\lambda_1 c^{(1)}+\ldots+\lambda_{n-1} c^{(n-1)}.
	\end{equation*}
	Then,
	\begin{equation*}
		\left\langle c,(s+z)-z\right\rangle =\lambda_1\left\langle c,c^{(1)}\right\rangle +\ldots+\lambda_{n-1}\left\langle c,c^{(n-1)}\right\rangle.
	\end{equation*}
	In view of~\eqref{eq:<c,c^i>=0}, this implies
	\begin{equation*}
		\left\langle c,(s+z)-z\right\rangle =0.
	\end{equation*}
	Comparing this with~\eqref{eq:H_c}, we obtain~$s+z\in H_c$.\qed
\end{proof}	
Define the following set of vectors:
\begin{equation}\label{eq:||e_i||=c_i/||c_i||}
	e^{(i)}=\frac{c^{(i)}}{\|c^{(i)}\|}\;(i=1,\ldots,n-1).
\end{equation}
It is easy to see that the set $\{e_1,\ldots,e_{n-1}\}$ is an orthonormal basis of subspace~$S_c$.

The procedure for constructing a receptive field is presented as Algorithm~\ref{alg:receptive_field}. This algorithm constructs a receptive field~$\mathfrak{G}(z,\eta,\delta)$ consisting of
\begin{equation}\label{eq:|G|}
	K_{\mathfrak{G}}=(2\eta+1)^{n-1}
\end{equation}
points. These points are arranged at the nodes of a regular lattice having the form of a hypersquare (a hypercube of dimension $n-1$) with the edge length equal to $2\eta\delta$. The edge length of the unit cell is~$\delta$. According to Step~13 of Algorithm~\ref{alg:receptive_field} and Proposition~\ref{prp:S_c+z=H_c}, this hypersquare lies in the hyperplane $H_c$ and has the center at the point $z$.
\begin{figure}[t]\begin{center}\begin{minipage}{0.6\textwidth}
			\begin{algorithm}[H]\caption{Building receptive field  $\mathfrak{G}(z,\eta,\delta)$}\label{alg:receptive_field}
				\begin{algorithmic}[1]
					\Require $z\in H_c$, $\eta\in\mathbb{N}$, $\delta\in\mathbb{R}_{>0}$
					\State $\mathfrak{G}\ceq\emptyset$
					\For{$i_{n-1}=0\ldots2\eta$}
					\State $s_{n-1}\ceq i_{n-1}\delta-\eta\delta$
					\For{$i_{n-2}=0\ldots2\eta$}
					\State $s_{n-2}\ceq i_{n-2}\delta-\eta\delta$
					\State \ldots
					\For{$i_1=0\ldots2\eta$}
					\State $s_1\ceq i_1\delta-\eta\delta$
					\State $s\ceq \mathbf{0}$
					\For{$j=1\ldots n-1$}
					\State $s\ceq s+s_je^{(j)}$
					\EndFor
					\State $\mathfrak{G}\ceq\mathfrak{G}\cup \{s+z\}$
					\EndFor
					\EndFor
					\EndFor
\end{algorithmic}\end{algorithm}\end{minipage}\end{center}\end{figure}
The drawback of Algorithm~\ref{alg:receptive_field} is that the number of nested  \textbf{for} loops depends on the dimension of the space. This issue can be solved using the function $\operatorname{G}$, which calculates a point of the receptive field by its ordinal number (numbering starts from zero; the order is determined by Algorithm~\ref{alg:receptive_field}). The implementation of the function $\operatorname{G}$ is represented as Algorithm~\ref{alg:function_G}.
\begin{figure}[t]\begin{center}\begin{minipage}{0.9\textwidth}
			\begin{algorithm}[H]\caption{Function~$\operatorname{G}$ calculates a receptive point by its number~$k$}\label{alg:function_G}
				\begin{algorithmic}[1]
					\Require $z\in H_c$, $\eta\in\mathbb{N}$, $\delta\in\mathbb{R}_{>0}$
					\Function {G}{$k,n,z,\eta,\delta$}
					\For{$j=(n-1)\ldots1$}
					\State $i_j\ceq \left\lfloor k/(2\eta+1)^{j-1} \right\rfloor$
					\State $k\ceq k \mod (2\eta+1)^{j-1}$
					\EndFor
					\State $g\ceq z$
					\For{$j=1\ldots(n-1)$}
					\State $g\ceq g+(i_j\delta-\eta\delta)e^{(j)}$
					\EndFor
					\State $\operatorname{G}\ceq g$
					\EndFunction
\end{algorithmic}\end{algorithm}\end{minipage}\end{center}\end{figure}
The following Proposition~\ref{prp:time_complexity_of_G} provides an estimation of time complexity of Algorithm~\ref{alg:function_G}.
\begin{proposition}\label{prp:time_complexity_of_G}
	Algorithm~\ref{alg:function_G} allows an implementation that has time complexity\footnote{Here, time complexity refers to the number of arithmetic and comparison operations required to execute the algorithm.}
	\begin{equation}\label{eq:c_G}
		c_G=4n^2+5n-9,
	\end{equation}
	where $n$ is the space dimension.
\end{proposition}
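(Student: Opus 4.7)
The plan is to prove the bound by exhibiting an explicit efficient implementation of Algorithm~\ref{alg:function_G} and then tallying every arithmetic and comparison operation it executes, using the convention spelled out in the footnote to the proposition.

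First I would lay out the implementation, exposing the tacit operations of the pseudocode. The key ingredients are: (i) precompute $P := 2\eta+1$ once and maintain the power $p = (2\eta+1)^{j-1}$ incrementally across iterations of the first \textbf{for} loop (rather than recomputing it from scratch on lines~3 and~4); (ii) realize the modulo on line~4 as $k := k - i_j\cdot p$, so that it shares its multiplication with the division used on line~3; (iii) precompute $d := \eta\delta$ before entering the second \textbf{for} loop so that each iteration only needs $s := i_j\delta - d$ rather than recomputing $\eta\delta$ every time. I would also make an explicit choice of how the floor and the modulo count as operations, to match the intended convention underlying the constant in~\eqref{eq:c_G}.

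Next I would tally the operations in three blocks. Setup (computing $P$, the initial power $P^{n-2}$, and $d$) contributes a small number of operations, linear in $n$. The first \textbf{for} loop (lines~2--5) runs $n-1$ times and costs a fixed number of scalar operations per iteration (one division, one floor, one multiplication, one subtraction, plus the incremental update of $p$), contributing a linear-in-$n$ term. The second \textbf{for} loop (lines~7--9) also runs $n-1$ times; each iteration performs the two scalar operations forming $s$, followed by the $n$-dimensional operations $s\cdot e^{(j)}$ and $g := g + s\cdot e^{(j)}$, so this loop supplies the dominant quadratic term in $n$.

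Finally I would add the three contributions, expand, and collect like terms to verify that the result equals exactly $4n^2 + 5n - 9$. The hard part is not any mathematical obstacle but the bookkeeping: keeping the accounting of the scalar operations hidden inside the vector operations consistent, and choosing the exact convention for counting floors, modulos, and the initial assignment $g := z$ so that the constants line up with~\eqref{eq:c_G}. Once the implementation and the counting convention are fixed, the verification reduces to a straightforward algebraic check.
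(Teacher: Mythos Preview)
Your proposal is correct and follows essentially the same route as the paper: exhibit a low-level implementation with precomputed constants, then tally the first loop (yielding $5(n-1)$), the outer second loop with its nested $n$-dimensional vector update (yielding $(4+4n)(n-1)=4(n^2-1)$), and sum to $4n^2+5n-9$. One small point to align with the paper's accounting: the setup quantities ($P=2\eta+1$, $P^{\,n-2}$, $\eta\delta$, and the assignment $g:=z$) are treated as precomputed constants independent of $k$ and excluded from $c_G$ entirely, so your ``linear-in-$n$ setup'' block should simply be declared free rather than included in the tally.
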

\begin{proof}
	Consider Algorithm ~\ref{alg:G_implementation} representing a low-level implementation of Algorithm~\ref{alg:function_G}.
	\begin{figure}[t]\begin{center}\begin{minipage}{0.7\textwidth}
			\begin{algorithm}[H]\caption{Low-level implementation of Algorithm~\ref{alg:function_G} }\label{alg:G_implementation}
				\begin{algorithmic}[1]
					\State $p \ceq 2\eta+1$; $r \ceq \eta\delta$; $h \ceq p^{n-2}$; $g \ceq z$
					\State $j \ceq n-1$
					\Repeat
					\State $l_j\ceq \left\lfloor k/h \right\rfloor$
					\State $k\ceq k \mod h$
					\State $h\ceq h/p$
					\State $j \ceq j-1$
					\Until{$j=0$}
					\State $j \ceq 1$
					\Repeat
					\State $w_j \ceq l_j\delta-r$
					\State $i \ceq 1$
					\Repeat
					\State $g_i\ceq g_i+w_je_i^{(j)}$
					\State $i\ceq i+1$
					\Until{$i>n$}
					\State $j \ceq j+1$
					\Until{$j=n$}
	\end{algorithmic}\end{algorithm}\end{minipage}\end{center}\end{figure}
	Values calculated in Steps~1--2 of the algorithm~\ref{alg:G_implementation} do not depend on the receptive point number~$k$ and therefore can be considered constants. In Steps~3--8, the \textbf{repeat/until} loop runs $(n-1)$ times and requires $c_{3:8}=5(n-1)$ operations. In steps~13--16, the nested \textbf{repeat/until} loop runs $n$ times and requires $c_{13:16}=4n$ operations. In steps~10--18, the external \textbf{repeat/until} loop runs $(n-1)$ times and requires $c_{10:18}=(4+c_{13-16})(n-1)=4(n^2-1)$ operations. In total, we obtain
	\begin{equation*}
		c_G=c_{3:8}+c_{10:18}=4n^2+5n-9.
	\end{equation*}
	\qed
\end{proof}

\begin{corollary}
	Time complexity of Algorithm~\ref{alg:function_G} can be estimated as $O(n^2)$.
\end{corollary}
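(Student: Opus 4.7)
The plan is to reduce the corollary to a direct application of the definition of the asymptotic $O(\cdot)$ notation, using the exact count already supplied by Proposition~\ref{prp:time_complexity_of_G}. Since that proposition establishes the closed-form expression $c_G = 4n^2 + 5n - 9$, essentially nothing remains to prove except the standard observation that a polynomial of degree two lies in the class $O(n^2)$.

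First I would recall the definition: $f(n)\in O(n^2)$ iff there exist constants $C>0$ and $n_0\in\mathbb{N}$ such that $|f(n)|\leqslant C n^2$ for every $n\geqslant n_0$. Then I would exhibit explicit admissible constants for $f(n)=4n^2+5n-9$. A convenient choice is $n_0=1$ and $C=9$, since for every $n\geqslant 1$ we have $5n\leqslant 5n^2$ and $-9\leqslant 0$, giving $4n^2+5n-9\leqslant 4n^2+5n^2 = 9n^2$. Together with the fact that $c_G\geqslant 0$ for $n\geqslant 2$ (so the absolute value is harmless), this yields $c_G=O(n^2)$.

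There is no substantive obstacle here; the statement is a routine asymptotic simplification of the exact count from Proposition~\ref{prp:time_complexity_of_G}, and the only mild care needed is picking constants $C$ and $n_0$ that handle the linear and constant terms uniformly. If one wished to tighten the presentation, one could instead argue by the standard rule that $\lim_{n\to\infty} (4n^2+5n-9)/n^2 = 4$, so the ratio is bounded, which immediately implies membership in $O(n^2)$.
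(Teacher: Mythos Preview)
Your proposal is correct and matches the paper's treatment: the corollary is stated without a separate proof, as it follows immediately from the explicit count $c_G = 4n^2 + 5n - 9$ in Proposition~\ref{prp:time_complexity_of_G}. Your explicit choice of constants merely spells out this obvious asymptotic step.
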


\begin{definition}
	Let $z\in H_c$. Fix $\eta\in\mathbb{N}$, $\delta\in\mathbb{R}_{>0}$. The image $\mathfrak{I}(z,\eta,\delta)$ generated by the receptive field $\mathfrak{G}(z,\eta,\delta)$ is an ordered set of real numbers defined by the equation
	\begin{equation}\label{eq:image}
		\mathfrak{I}(z,\eta,\delta)=\left\lbrace \rho_c(\gamma_M(g)) \middle| g\in \mathfrak{G}(z,\eta,\delta) \right\rbrace.
	\end{equation}
	The order of the real numbers in the image is determined by the order of the respective receptive points.
\end{definition}
The following Algorithm~\ref{alg:image} implements the function $\mathfrak{I}(z,\eta,\delta)$ building an image as a list of real numbers.
\begin{figure}[H]\begin{center}\begin{minipage}{0.5\textwidth}
			\begin{algorithm}[H]\caption{Building image $\mathfrak{I}(z,\eta,\delta)$}\label{alg:image}
				\begin{algorithmic}[1]
					\Require $z\in H_c$, $\eta\in\mathbb{N}$, $\delta\in\mathbb{R}_{>0}$
					\Function {$\mathfrak{I}$}{$z,\eta,\delta$}
					\State $\mathfrak{I}\ceq [\,]$
					\For{$k=0\ldots((2\eta+1)^{n-1}-1)$}
					\State $g_k\ceq \operatorname{G}(k,n,z,\eta,\delta)$
					\State $\mathfrak{I}\ceq\mathfrak{I} \doubleplus [ \rho_c(\gamma_M(g_k))]$
					\EndFor
					\EndFunction
\end{algorithmic}\end{algorithm}\end{minipage}\end{center}\end{figure}
\noindent Here, $[\,]$ stands for the empty list, and~$\doubleplus$ stands for the operation of list concatenation.

Let $\left\langle \tilde a_i,c\right\rangle >0$. This means that the half-space~$H_i^+$ is recessive with respect to the vector $c$ (see Proposition~\ref{prp:recessive_halfspace_condition}). Let there be a point~$u\in H_i\cap M$. Assume that we managed to create an artificial neural network DNN, which receives the image~$\mathfrak{I}(\pi_c(u),\eta,\delta)$ as an input, and outputs the point~$u'$ such that
\begin{equation*}
	u'=\arg \min \left\lbrace \rho_c(x)\middle| x\in H_i\cap M\right\rbrace.
\end{equation*}
Then, we can build the following Algorithm~\ref{alg:DNN_movement_method} solving the linear programming problem~\eqref{eq:coloring_problem} using DNN.
\begin{figure}[H]
	\begin{center}
		\begin{minipage}{0.75\textwidth}
			\begin{algorithm}[H]
				\caption{Linear programming using DNN}\label{alg:DNN_movement_method}
				\begin{algorithmic}[1]
					\Require $u^{(1)}\in H_i\cap M,\; \left\langle \tilde a_i,c\right\rangle >0, \; z\in H_c;\; \eta \in \mathbb{N},\; \delta \in \mathbb{R}_{>0}$
					\State $k \ceq 1$
					\Repeat
					\State $\mathcal{I} \ceq \mathfrak{I}(u^{(k)},\eta,\delta)$
					\State $u^{(k+1)}\ceq \operatorname{DNN}(\mathcal{I})$
					\State $k \ceq k+1$
					\Until{$u^{(k)}\ne u^{(k-1)}$}
					\State $\bar x\ceq u^{(k)}$
					\State \textbf{stop}
				\end{algorithmic}
			\end{algorithm}
		\end{minipage}
	\end{center}
\end{figure}
\noindent Only an outline of the forthcoming algorithm is presented here, which needs further formalization, detalization and refinement.
\section{Parallel Algorithm for Building LP Problem Image}\label{sec:parallel_algorithm}
When solving LP problems of large dimension with a large number of constraints, Algorithm ~\ref{alg:image} of building the LP problem image can incur significant runtime overhead. This section presents a parallel version of Algorithm~\ref{alg:image}, which significantly reduces the runtime overhead of building the image of a large-scale LP problem. The parallel implementation of Algorithm~\ref{alg:image} is based on the BSF parallel computation model~\cite{Sokolinsky2021,Sokolinsky2018}. The BSF model is intended for a cluster computing system, uses the master/worker paradigm and requires the representation of the algorithm in the form of operations on lists using higher-order functions \emph{Map} and \emph{Reduce} defined in the Bird--Meertens formalism~\cite{Bird1988}. The BSF model also provides a cost metric for the analytical evaluation of the scalability of a parallel algorithm that meets the specified requirements. Examples of the BSF model application can be found in~\cite{Ezhova2018,Sokolinsky2020b,Sokolinsky2020a,Sokolinsky2021b,Sokolinsky2021d}.

Let us represent Algorithm~\ref{alg:image} in the form of operations on lists using higher-order functions \emph{Map} and \emph{Reduce}. We use the list of ordinal numbers of inequalities of system~\eqref{eq:inequalities} as a list, which is the second parameter of the higher-order function \emph{Map}:
\begin{equation}\label{eq:map-list}
	\mathcal{L}_{map}=\left[1,\ldots, m\right].
\end{equation}
Designate $\mathbb{R}_\infty=\mathbb{R}\cup \{\infty\}$. We define a parameterized function \[\operatorname{F}_k:\left\lbrace 1,\ldots,m\right\rbrace \to \mathbb{R}_{\infty},\]
which is the first parameter of a higher-order function \emph{Map}, as follows:
\begin{equation}\label{eq:F_k}
	\operatorname{F}_k(i) = \left\{ \begin{gathered}
		\rho_c\left( \gamma_i(g_k)\right),\; \text{if} \; \left\langle \tilde a_i,c\right\rangle > 0 \; \text{and} \; \gamma_i(g_k) \in M; \hfill \\
		\infty,\; \text{if} \; \left\langle \tilde a_i,c\right\rangle \leqslant 0 \; \text{or} \; \gamma_i(g_k) \notin M. \hfill
	\end{gathered}  \right.
\end{equation}
where $g_k=G(k,n,z,\eta,\delta)$ (see Algorithm~\ref{alg:function_G}), and $\gamma_i(g_k)$ is calculated by equation~\eqref{eq:gamma_i}. Informally, the function~$\operatorname{F}_k$ maps the ordinal number of half-space $H_i^+$ to the distance from the objective projection to the objective hyperplane if $H_i^+$ is recessive with respect to $c$ (see Proposition~\ref{prp:recessive_halfspace_condition}) and the objective projection belongs to $M$. Otherwise, $\operatorname{F}_k$ returns the special value $\infty$.

The higher-order function \emph{Map} transforms the list $\mathcal{L}_{map}$ into the list $\mathcal{L}_{reduce}$ by applying the function $\operatorname{F}_k$ to each element of the list $\mathcal{L}_{map}$:
\begin{equation*}
	\mathcal{L}_{reduce}=\operatorname{Map}\left( \operatorname{F}_k, \mathcal{L}_{map}\right)  = \left[\operatorname{F}_k(1),\ldots, \operatorname{F}_k(m)\right] = \left[\rho_1,\ldots, \rho_m\right].
\end{equation*}
Define the associative binary operation $\reduceop\;: \mathbb{R}_{\infty}\to\mathbb{R}_{\infty}$ as follows:
\begin{eqnarray*}
	\infty \reduceop\; \infty &=& \infty;\\
	\forall \alpha \in \mathbb{R} : \alpha \reduceop\; \infty &=& \alpha;\\
	\forall \alpha,\beta \in \mathbb{R} : \alpha \reduceop\; \beta &=& \min(\alpha,\beta).\\
\end{eqnarray*}
Informally, the operation~$\reduceop\;$ calculates the minimum of two numbers.

The higher-order function \emph{Reduce} folds the list $\mathcal{L}_{reduce}$ to the single value \mbox{$\rho \in \mathbb{R}_\infty$} by sequentially applying the operation $\reduceop$\, to the entire list:
\begin{equation*}
	\operatorname{Reduce}(\reduceop\;,\mathcal{L}_{reduce}) = \rho_1 \reduceop\; \rho_2  \reduceop\; \ldots  \reduceop\; \rho_m = \rho.
\end{equation*}

The Algorithm~\ref{alg:map_reduce} builds the image~$\mathfrak{I}$ of LP problem using higher-order functions $\emph{Map}$ and $\emph{Reduce}$.
\begin{figure}[t]\begin{center}\begin{minipage}{0.75\textwidth}
			\begin{algorithm}[H]\caption{Building the image~$\mathfrak{I}$ by $\emph{Map}$ and $\emph{Reduce}$ }\label{alg:map_reduce}
				\begin{algorithmic}[1]
					\Require $z\in H_c$, $\eta\in\mathbb{N}$, $\delta\in\mathbb{R}_{>0}$
					\State \textbf{input} $n,m,A,b,c,z,\eta,\delta$
					\State $\mathfrak{I}\ceq [\;]$
					\State $\mathcal{L}_{map} \ceq [1,\ldots,m]$
					\For{$k=0\ldots((2\eta+1)^{n-1}-1)$}
					\State $\mathcal{L}_{reduce} \ceq \operatorname{Map} (\operatorname{F}_k, \mathcal{L}_{map}) $
					\State $\rho \ceq \operatorname{Reduce} (\reduceop\;, \mathcal{L}_{reduce})$
					\State $\mathfrak{I}\ceq\mathfrak{I} \doubleplus [\rho]$
					\EndFor
					\State \textbf{output} $\mathfrak{I}$					\State \textbf{stop}
\end{algorithmic}\end{algorithm}\end{minipage}\end{center}\end{figure}
The parallel version of Algorithm~\ref{alg:map_reduce} is based on algorithmic template~2 in~\cite{Sokolinsky2021}. The result is presented as Algorithm~\ref{alg:parallel}.
\begin{algorithm}[H]
	\caption{Parallel algorithm of building image $\mathfrak{I}$}\label{alg:parallel}
 	\begin{multicols}{2}
	\begin{center}
		\textbf{Master} \\
		\textbf{$l$th Worker (\emph{l}=0,\dots,\emph{L-1})}
	\end{center}
\end{multicols}
\algrule
\begin{multicols}{2}
	\begin{algorithmic}[1]
		\State \textbf{input} $n$
		\State $\mathfrak{I}\ceq [\;]$
		\State $k \ceq 0$
		\Repeat
		\State \textbf{SendToWorkers} $k$
		\State
		\State
		\State \textbf{RecvFromWorkers} $[\rho_0,\ldots,\rho_{L-1}]$
		\State $\rho \ceq \operatorname{Reduce} \left( \reduceop\;, [\rho_0,\ldots,\rho_{L-1}]\right) $
		\State $\mathfrak{I}\ceq\mathfrak{I} \doubleplus [\rho]$
		\State $k\ceq k+1$
		\State $exit \ceq \left(k\geqslant(2\eta+1)^{n-1}\right) $
		\State \textbf{SendToWorkers} $exit$
		\Until {$exit$}
		\State \textbf{output} $\mathfrak{I}$
		\State \textbf{stop}
	\end{algorithmic}
	\begin{algorithmic}[1]
		\State \textbf{input} $n,m,A,b,c,z,\eta,\delta$
		\State $L\ceq \mathrm{NumberOfWorkers}$
		\State $\mathcal{L}_{map(l)} \ceq [lm/L,\ldots,((l+1)m/L)-1]$
		\Repeat
		\State \textbf{RecvFromMaster} $k$
		\State $\mathcal{L}_{reduce(l)} \ceq \operatorname{Map} \left( \operatorname{F}_k, \mathcal{L}_{map(l)}\right)  $
		\State $\rho_l \ceq \operatorname{Reduce} \left( \reduceop\;, \mathcal{L}_{reduce(l)}\right) $
		\State \textbf{SendToMaster} $\rho_l$
		\State
		\State
		\State
		\State
		\State \textbf{RecvFromMaster} $exit$
		\Until {$exit$}
		\State
		\State \textbf{stop}
	\end{algorithmic}
\end{multicols}
\end{algorithm}

Let us explain the steps of Algorithm~\ref{alg:parallel}. For simplicity, we assume that the number of constraints~$m$ is a multiple of the number of workers~$L$. We also assume that the numbering of inequalities starts from zero. The parallel algorithm includes~$L+1$ processes: one master process and~$L$ worker processes. The master manages the computations. In Step 1, the master reads the space dimension~$n$. In Step 2 of the master, the image variable~$\mathfrak{I}$ is initialized to the empty list. Step~3 of the master assigns zero to the iteration counter~$k$. At Steps ~\mbox{4--14}, the master organizes the \textbf{repeat/until} loop, in which the image~$\mathfrak{I}$ of the LP problem is built. In Step~5, the master sends the receptive point number $g_k$ to all workers. In Step~8, the master is waiting the particular results from all workers. These particular results are folded to a single value, which is added to the image $\mathfrak{I}$ (Steps~9--10 of the master). Step~11 of the master increases the iteration counter~$k$ by~1. Step~12 of the master assigns the logical value $\left( k\geqslant(2\eta+1)^{n-1}\right)$ to the Boolean variable \emph{exit}. In Step~13, the master sends the value of the Boolean variable \emph{exit} to all workers. According to~\eqref{eq:|G|}, $exit=false$ means that not all the points of the receptive field have been processed. In this case, the control is passed to the next iteration of the external \textbf{repeat/until} loop (Step~14 of the master). After exiting the \textbf{repeat/until} loop, the master outputs the constructed image $\mathfrak{I}$ (Step~15) and terminates its work (Step~16).

All workers execute the same program codes but with different data. In~Step~3, the $l$th worker defines its own sublist. In Step~4, the worker enters the \textbf{repeat/until} loop. In Step~5, it receives the number~$k$ of the next receptive point. In Step~6, the worker processes its sublist~$\mathcal{L}_{map(l)}$ using the higher-order function \textit{Map}, which applies the parameterized function~$\operatorname{F}_k$, defined by~\eqref{eq:F_k}, to each element of the sublist. The result is the sublist~$\mathcal{L}_{reduce(l)}$, which includes the distances~$\operatorname{F}_k(i)$ from the objective hyperplane~$H_c$ to the objective projections of the receptive point~$g_k$ onto hyperplanes~$H_i$ for all~$i$ from the sublist~$\mathcal{L}_{map(l)}$. In Step~7, the worker uses the higher-order function \textit{Reduce} to fold the sublist $\mathcal{L}_{reduce(l)}$ to the single value of $\rho_l$, using the associative binary operation $\reduceop$\;, which calculates the minimum distance. The computed particular result is sent to the master (Step~8 of the worker). In Step~13, the worker is waiting for the master to send the value of the Boolean variable $exit$. If the received value is false, the worker continues executing the \textbf{repeat/until} loop (Step~14 of the worker). Otherwise, the worker process is terminated in Step~16.

Let us obtain an analytical estimation of the \emph{scalability bound} of the parallel algorithm~\ref{alg:parallel} using the cost metric of the BSF parallel computation model~\cite{Sokolinsky2021}. Here, the scalability bound means the number of workers at which maximum speedup is achieved. The cost metric of the BSF model includes the following cost parameters for the \textbf{repeat/until} loop (Steps 4--14) of the parallel algorithm~\ref{alg:parallel}:
\begin{tabbing}
	MM. \= M \= MMMMMMMMMMMMMMMMMMMMMMMMMMMMMMMMMMMMMMMMMMMMMMMMMMMMMMMMMMMMMMMMMMMMMMMMMMMMMM \kill
	$m$\> : \> length of the list $\mathcal{L}_{map}$;\\
	$D$\> : \> latency (the time taken by the master to send one byte message\\
	\>\> to a single worker);\\
	${t_c}$\> : \> the time taken by the master to send the coordinates of the receptive\\
	\>\> point to a single worker and receive the computed value from it\\
	\>\> (including latency);\\
	${t_{Map}}$\> : \> the time taken by a single worker to process the higher-order function\\ 
	\>\> \textit{Map} for the entire list $\mathcal{L}_{map}$;\\
	${t_a}$\> : \> the time taken by computing the binary operation $\reduceop$\;.
\end{tabbing}
According to equation~(14) from ~\cite{Sokolinsky2021}, the scalability bound of the Algorithm~\ref{alg:parallel} can be estimated as follows:
\begin{equation}\label{eq:scalability}
	{L_{max}} = \frac{1}{2}\sqrt {{{\left( {\frac{{{t_c}}}{{{t_a}\ln 2}}} \right)}^2} + \frac{{{t_{Map}}}}{{{t_a}}} + 4m}  - \frac{{{t_c}}}{{{t_a}\ln 2}}.
\end{equation}
Calculate estimations for the time parameters of equation ~\eqref{eq:scalability}. To do this, we will introduce the following notation for a single iteration of the \textbf{repeat/until} loop (Steps 4--14) of Algorithm~\ref{alg:parallel}:
\begin{tabbing}
	MM. \= M \= MMMMMMMMMMMMMMMMMMMMMMMMMMMMMMMMMMMMMMMMMMMMMMMMMMMMMMMMMMMMMMMMMMMMMMM \kill
	${c_c}$\> : \> the quantity of numbers sent from the master to the worker and\\
	\>\> back within one iteration;\\
	${c_{Map}}$\> : \> the quantity of arithmetic and comparison operations computed in\\
	\>\> Step~5 of the sequential algorithm~\ref{alg:map_reduce};\\
	${c_a}$\> : \> the quantity of arithmetic and comparison operations required\\ 
	\>\> to compute the binary operation $\reduceop$\,.\\
\end{tabbing}
At the beginning of every iteration, the master sends each worker the receptive point number~$k$. In response, the worker sends the distance from the receptive point~$g_k$ to its objective projection. Therefore,
\begin{equation}\label{eq:c_c}
	c_c = 2.
\end{equation}
In the context of Algorithm~\ref{alg:map_reduce}
\begin{equation}\label{eq:c_Map_0}
	c_{Map} = \left( c_G+c_{F_k}\right)m,
\end{equation}
where $c_G$ is the number of operations taken to compute coordinates of the point~$g_k$, and~$c_{F_k}$ is the number of operations required to calculate the value of~$\operatorname{F}_k(i)$, assuming that the coordinates of the point~$g_k$ have already been calculated. The estimation of~$c_G$ is provided by Proposition~\ref{prp:time_complexity_of_G}. Let us estimate~$c_{F_k}$. According to~\eqref{eq:gamma_i}, calculating the objective projection $\gamma_i(g)$ takes \mbox{$(6n-2)$} arithmetic operations. It follows from ~\eqref{eq:distance} that the calculation of $\rho_c(x)$ takes $(5n-1)$ arithmetic operations. Inequalities~\eqref{eq:inequalities} imply that checking the condition~$x\in M$ takes~$m(2n-1)$ arithmetic operations and~$m$ comparison operations. Hence, $\operatorname{F}_k(i)$ takes a total of  $(2mn+11n-3)$ operations. Hence,
\begin{equation}\label{eq:c_F_k}
	c_{F_k} = 2mn+11n-3.
\end{equation}
Substituting the right-hand sides of equations~\eqref{eq:c_G} and~\eqref{eq:c_F_k} in~\eqref{eq:c_Map_0}, we obtain
\begin{equation}\label{eq:c_Map}
	c_{Map} = 4n^2m+2m^2n+16nm-12m.
\end{equation}
To perform the binary operation $\reduceop$\;, one comparison operation must be executed:
\begin{equation}\label{eq:c_a}
	c_a = 1.
\end{equation}
Let $\tau_{op}$ stand for average execution time of arithmetic and comparison operations, and $\tau_{tr}$ stand for average time of sending a single real number (excluding the latency). Then, using equations~\eqref{eq:c_c}, \eqref{eq:c_Map}, and~\eqref{eq:c_a} we obtain
\begin{eqnarray}
	&&t_c=c_c\tau_{tr}+2D=2(\tau_{tr}+D);\label{eq:t_c} \\
	&&t_{Map}=c_{Map}\tau_{op}=(4n^2m+2m^2n+16nm-12m)\tau_{op};\label{eq:t_Map} \\
	&&t_a=c_a\tau_{op}=\tau_{op}.\label{eq:t_a}
\end{eqnarray}
Substituting the right-hand sides of equations~\eqref{eq:t_c} -- \eqref{eq:t_a} in~\eqref{eq:scalability}, we obtain the following estimations of the scalability bound of Algorithm~\ref{alg:parallel}:
\begin{equation*}
	L_{max} = \frac{1}{2}\sqrt {{{\left( {\frac{2(\tau_{tr}+D)}{{{\tau _{op}}\ln 2}}} \right)}^2} + 4n^2m+2m^2n+16nm-12m}  - \frac{2(\tau_{tr}+D)}{{{\tau _{op}}\ln 2}}.
\end{equation*}
where~$n$ is the space dimension, $m$ is the number of constraints, $D$ is the latency. For large values of $m$ and $n$, this is equivalent to
\begin{equation}\label{eq:esimation1}
	L_{max} \approx O(\sqrt{2n^2m+m^2n+8nm-6m}).
\end{equation}
If we assume that $m=O(n)$, then it follows from ~\eqref{eq:esimation1} that
\begin{equation}\label{eq:esimation2}
	L_{max} \approx O(n\sqrt{n}),
\end{equation}
where~$n$ is the space dimension. The estimation~\eqref{eq:esimation2} allows us to conclude that Algorithm~\ref{alg:parallel} scales very well
\footnote{
	Let $L_{max}=O(n^\alpha)$. We say: the algorithm \emph{scales perfectly} if~$\alpha > 1$; the algorithm \emph{scales well} if~$\alpha = 1$; the algorithm demonstrates \emph{limited scalability} if~$0<\alpha < 1$; the algorithm does \emph{not scale} if~$\alpha = 0$.
}.
In the following section, we will verify the analytical estimation~\eqref{eq:esimation2} by conducting large-scale computational experiments on a real cluster computing system.
\section{Computational Experiments}\label{sec:computational_experiments}
We performed a parallel implementation of the algorithm~\ref{alg:parallel} in the form of the ViLiPP (Visualization of Linear Programming Problem) program in C++ using a BSF-skeleton~\cite{Sokolinsky2021a}. The BSF-skeleton based on the BSF parallel computation model encapsulates all aspects related to parallelization of the program using the MPI~\cite{Gropp2012}  library and the OpenMP~\cite{Kale2019} programming interface. The source code of the ViLiPP program is freely available on the Internet at \url{https://github.com/nikolay-olkhovsky/LP-visualization-MPI}. Using the parallel program ViLiPP, we conducted experiments to evaluate the scalability of Algorithm~\ref{alg:parallel} on the cluster computing system ``Tornado SUSU''~\cite{Kostenetskiy2018}, the characteristics of which are presented in Table~\ref{tbl:tornado-susu}.
\begin{table}[t]
	\caption{Specifications of the ``Tornado SUSU'' computing cluster}
	\centering
	\begin{tabular}{l|l}
		\hline
		Parameter & Value \\
		\hline
		Number of processor nodes & 480 \\
		Processor & Intel Xeon X5680 (6 cores, 3.33 GHz) \\
		Processors per node & 2\\
		Memory per node & 24 GB DDR3\\
		Interconnect & InfiniBand QDR (40 Gbit/s) \\
		Operating system & Linux CentOS\\
		\hline
	\end{tabular}\label{tbl:tornado-susu}
\end{table}

To conduct computational experiments, we constructed three random LP problems using the FRaGenLP problem generator~\cite{Sokolinsky2021b}. The parameters of these problems are given in Table~\ref{tbl:problems}. In all cases, the number of non-zero values of the matrix $A$ of problem~\eqref{eq:LP-problem} was~100\%. For all problems, the rank~$\eta$ of the receptive field was assumed to be equal to~$2$. In accordance to equation~\eqref{eq:|G|}, the receptive field cardinality demonstrated an exponential growth with an increase of space dimension.
\begin{table}[t]
	\caption{Parameters of test LP problems}
	\centering
	\begin{tabular}{p{1.2cm}|p{1.2cm}|p{1.6cm}|p{1.7cm}|p{2.2cm}}
		\hline
		Problem ID & Dimen\-sion & Number of constraints & Non-zero values in $A$ & Receptive field cardinality \\
		\hline
		$LP7$ & 7 & 4016 & 100\% & 15\,625 \\
		$LP6$ & 6 & 4014 & 100\% & 3\,125 \\
		$LP5$ & 5 & 4012 & 100\% & 625 \\
		\hline
	\end{tabular}\label{tbl:problems}
\end{table}

The results of computational experiments are presented in Table~\ref{tbl:times} and in Fig.~\ref{Fig04_Speedup}. In all runs,  a separate processor node  was allocated for each worker. One more separate processor node was allocated for the master. Computational experiments shows that the ViLiPP program scalability bound increases with the increase of the problem dimension. For LP5, the maximum of speedup curve is reached around 190 nodes. For LP6, the maximum is located around 260 nodes. For LP7, the scalability bound is approximately equal to 326 nodes. At the same time, there is an exponential increase of the runtime of building the LP problem image. Building the LP5 problem image takes 10 seconds on 11 processor nodes. Building the LP7 problem image takes 5 minutes on the same number of nodes. An additional computational experiment showed that building an image of the problem with $n= 9$ takes 1.5 hours on 11 processor nodes.

\begin{figure}[t]\begin{center}\begin{minipage}{0.75\textwidth}
			\begin{table}[H]
				\caption{Runtime of building LP  problem image (sec.)}
				\centering
				\begin{tabular}{p{2.2cm}|p{1.8cm}|p{1.8cm}|p{1.8cm}}
					\hline
					Number of processor nodes & LP5 & LP6 & LP7 \\
					\hline
					11 	& 9.81 & 54.45 & 303.78 \\
					56 	& 1.93 & 10.02 & 59.43\\
					101 & 1.55 & 6.29 & 33.82 \\
					146 & 1.39 & 4.84 & 24.73 \\
					191 & 1.35 & 4.20 & 21.10 \\
					236 & 1.38 & 3.98 & 19.20 \\
					281 & 1.45 & 3.98 & 18.47 \\
					326 & 1.55 & 4.14 & 18.30 \\
					\hline
				\end{tabular}\label{tbl:times}
			\end{table}
\end{minipage}\end{center}\end{figure}

The conducted experiments show that on the current development level of higgh-performance computing, the proposed method is applicable to solving LP problems that include up to 100 variables and up to 100\,000 constraints.
\begin{figure}[t]
	\centering
	\includegraphics[scale=0.5]{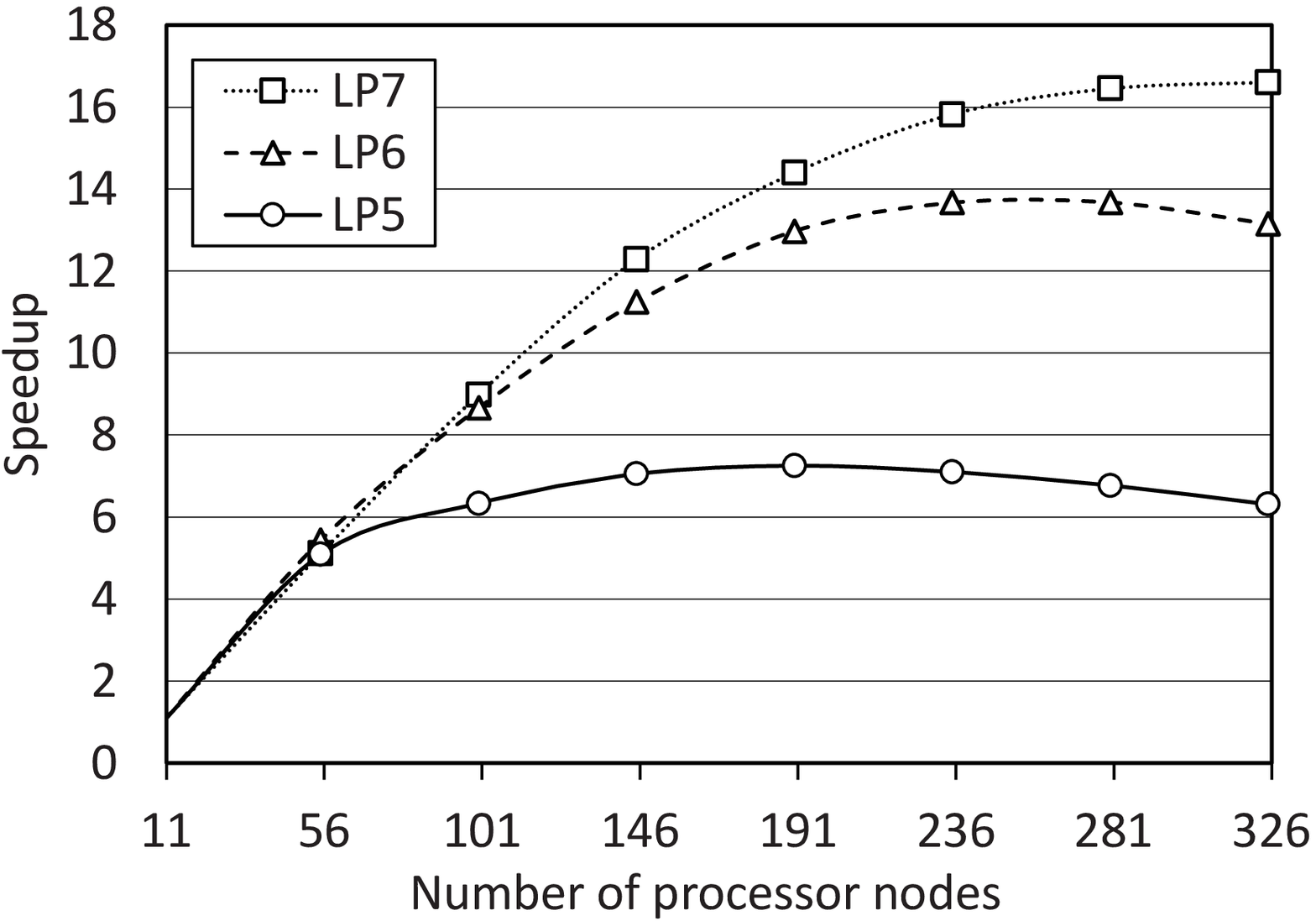}
	\caption{ViLiPP parallel program speedup for LP problems of various sizes.}
	\label{Fig04_Speedup}
\end{figure}
\section*{Conclusion}\label{sec:conclusion}
The main contribution of this work is a mathematical model of the visual representation of a multidimensional linear programming problem of finding the maximum of a linear objective function in a feasible region. The central element of the model is the receptive field, which is a finite set of points located at the nodes of a square lattice constructed inside a hypercube. All points of the receptive field lie in the objective hyperplane orthogonal to the vector $c=(c_1,\ldots,c_n)$, which is composed of the coefficients of the linear objective function. The target hyperplane is placed so that for any point $x$ from the feasible region and any point $z$ of the objective hyperplane, the inequality $\left\langle c,x\right\rangle < \left\langle c,z\right\rangle$ holds. We can say that the receptive field is a multidimensional abstraction of the digital camera image sensor. From each point of the receptive field, we construct a ray parallel to the vector $c$ and directed to the side of the feasible region. The point at which the ray hits the feasible region is called the objective projection. The image of a linear programming problem is a matrix of dimension $(n-1)$, in which each element is the distance from the point of the receptive field to the corresponding point of objective projection.

The algorithm for calculating the coordinates of a receptive field point by its ordinal number is described. It is shown that the time complexity of this algorithm can be estimated as ~$O(n^2)$, where $n$ is the space dimension. An outline of the algorithm for solving the linear programming problem by an artificial neural network using the constructed images is presented. A parallel algorithm for constructing an image of a linear programming problem on computing clusters is proposed. This algorithm is based on the BSF parallel computation model, which uses the master/workers paradigm and assumes a representation of the algorithm in the form of operations on lists using higher-order functions \textit{Map} and \textit{Reduce}. It is shown that the scalability bound of the parallel algorithm admits the estimation of $O(n\sqrt n)$. This means that the algorithm demonstrates a good scalability.

The parallel algorithm for constructing the multidimensional image of a linear programming problem is implemented in C++ using the BSF--skeleton that encapsulates all aspects related to parallelization by the MPI library and the OpenMP API. Using this software implementation, we conducted large-scale computational experiments on constructing images for random multidimensional linear programming problems with a large number of constraints on the ``Tornado SUSU'' computing cluster. The conducted experiments confirm the validity and efficiency of the proposed approaches. At the same time, it should be noted that the time of image construction increases exponentially with an increase in the space dimension. Therefore, the proposed method is applicable for problems with a number of variables not exceeding 100. However, the number of constraints can theoretically be unbounded.

Future research directions are as follows.
\begin{enumerate}
	\item Develop a method for solving linear programming problems based on the analysis of its images and prove its convergence.
	\item Develop and implement a method for the training data set generation to create a neural network that solves linear programming problems by analyzing their images.
	\item Develop and train an artificial neural network solving multidimensional linear programming problems.
	\item Develop and implement a parallel program on a computing cluster that constructs multidimensional images of a linear programming problem and calculates its solution using the artificial neural network.
\end{enumerate}

\section*{Funding} 
The reported study was partially funded by the Russian Foundation for Basic Research (project No.~20-07-00092-a) and the Ministry of Science and Higher Education of the Russian Federation (government order FENU-2020-0022).
%
%
\bibliographystyle{spmpsci}
\bibliography{Bibliography}

\begin{thebibliography}{10}
\providecommand{\url}[1]{{#1}}
\providecommand{\urlprefix}{URL }
\expandafter\ifx\csname urlstyle\endcsname\relax
  \providecommand{\doi}[1]{doi:\discretionary{}{}{}#1}\else
  \providecommand{\doi}{doi:\discretionary{}{}{}\begingroup
  \urlstyle{rm}\Url}\fi

\bibitem{Bird1988}
Bird, R.S.: {Lectures on Constructive Functional Programming}.
\newblock In: M.~Broy (ed.) Constructive Methods in Computing Science. NATO ASI
  Series F: Computer and Systems Sciences, vol. 55, pp. 151--216. Springer,
  Berlin, Heidelberg (1988)

\bibitem{Bixby2002}
Bixby, R.: {Solving Real-World Linear Programs: A Decade and More of Progress}.
\newblock Operations Research \textbf{50}(1), 3--15 (2002).
\newblock \doi{10.1287/opre.50.1.3.17780}

\bibitem{Brogaard2014}
Brogaard, J., Hendershott, T., Riordan, R.: {High-Frequency Trading and Price
  Discovery}.
\newblock Review of Financial Studies \textbf{27}(8), 2267--2306 (2014).
\newblock \doi{10.1093/rfs/hhu032}

\bibitem{Chung2015}
Chung, W.: {Applying large-scale linear programming in business analytics}.
\newblock In: 2015 IEEE International Conference on Industrial Engineering and
  Engineering Management (IEEM), pp. 1860--1864. IEEE (2015).
\newblock \doi{10.1109/IEEM.2015.7385970}

\bibitem{Dantzig1998}
Dantzig, G.: {Linear programming and extensions}.
\newblock Princeton university press, Princeton, N.J. (1998)

\bibitem{Dongarra2019}
Dongarra, J., Gottlieb, S., Kramer, W.: {Race to Exascale}.
\newblock Computing in Science and Engineering \textbf{21}(1), 4--5 (2019).
\newblock \doi{10.1109/MCSE.2018.2882574}

\bibitem{Ezhova2018}
Ezhova, N.A., Sokolinsky, L.B.: {Scalability Evaluation of Iterative Algorithms
  Used for Supercomputer Simulation of Physical processes}.
\newblock In: Proceedings - 2018 Global Smart Industry Conference, GloSIC 2018,
  art. no. 8570131, p.~10. IEEE (2018).
\newblock \doi{10.1109/GloSIC.2018.8570131}

\bibitem{Gondzio2014}
Gondzio, J., Gruca, J.A., Hall, J., Laskowski, W., Zukowski, M.: {Solving
  large-scale optimization problems related to Bell's Theorem}.
\newblock Journal of Computational and Applied Mathematics \textbf{263},
  392--404 (2014).
\newblock \doi{10.1016/j.cam.2013.12.003}

\bibitem{Gropp2012}
Gropp, W.: {MPI 3 and Beyond: Why MPI Is Successful and What Challenges It
  Faces}.
\newblock In: J.~Traff, S.~Benkner, J.~Dongarra (eds.) Recent Advances in the
  Message Passing Interface. EuroMPI 2012. Lecture Notes in Computer Science,
  vol. 7490, pp. 1--9. Springer, Berlin, Heidelberg (2012).
\newblock \doi{10.1007/978-3-642-33518-1\_1}

\bibitem{Hafsteinsson1994}
Hafsteinsson, H., Levkovitz, R., Mitra, G.: {Solving large scale linear
  programming problems using an interior point method on a massively parallel
  SIMD computer}.
\newblock Parallel Algorithms and Applications \textbf{4}(3-4), 301--316
  (1994).
\newblock \doi{10.1080/10637199408915470}

\bibitem{Hartung2018}
Hartung, T.: {Making Big Sense From Big Data}.
\newblock Frontiers in Big Data \textbf{1}, 5 (2018).
\newblock \doi{10.3389/fdata.2018.00005}

\bibitem{Jagadish2014}
Jagadish, H.V., Gehrke, J., Labrinidis, A., Papakonstantinou, Y., Patel, J.M.,
  Ramakrishnan, R., Shahabi, C.: {Big data and its technical challenges}.
\newblock Communications of the ACM \textbf{57}(7), 86--94 (2014).
\newblock \doi{10.1145/2611567}

\bibitem{Kale2019}
Kale, V.: {Shared-memory Parallel Programming with OpenMP}.
\newblock In: Parallel Computing Architectures and APIs, chap.~14, pp.
  213--222. Chapman and Hall/CRC, Boca Raton (2019).
\newblock
  \doi{10.1201/9781351029223-18/SHARED-MEMORY-PARALLEL-PROGRAMMING-OPENMP-VIVEK-KALE}

\bibitem{Karmarkar1984}
Karmarkar, N.: {A new polynomial-time algorithm for linear programming}.
\newblock Combinatorica \textbf{4}(4), 373--395 (1984).
\newblock \doi{10.1007/BF02579150}

\bibitem{Karypis1994}
Karypis, G., Gupta, A., Kumar, V.: {A parallel formulation of interior point
  algorithms}.
\newblock In: Proceedings of the 1994 ACM/IEEE conference on Supercomputing
  (Supercomputing'94), pp. 204--213. IEEE Computer Society Press, Los Alamitos,
  CA, USA (1994).
\newblock \doi{10.1109/SUPERC.1994.344280}

\bibitem{Kostenetskiy2018}
Kostenetskiy, P., Semenikhina, P.: {SUSU Supercomputer Resources for Industry
  and fundamental Science}.
\newblock In: Proceedings - 2018 Global Smart Industry Conference, GloSIC 2018,
  art. no. 8570068, p.~7. IEEE (2018).
\newblock \doi{10.1109/GloSIC.2018.8570068}

\bibitem{Lachhwani2020}
Lachhwani, K.: {Application of Neural Network Models for Mathematical
  Programming Problems: A State of Art Review}.
\newblock Archives of Computational Methods in Engineering \textbf{27},
  171--182 (2020).
\newblock \doi{10.1007/s11831-018-09309-5}

\bibitem{LeCun2015}
LeCun, Y., Bengio, Y., Hinton, G.: {Deep learning}.
\newblock Nature \textbf{521}(7553), 436--444 (2015).
\newblock \doi{10.1038/nature14539}

\bibitem{Mamalis2015}
Mamalis, B., Pantziou, G.: {Advances in the Parallelization of the Simplex
  Method}.
\newblock In: C.~Zaroliagis, G.~Pantziou, S.~Kontogiannis (eds.) Algorithms,
  Probability, Networks, and Games. Lecture Notes in Computer Science, vol.
  9295, pp. 281--307. Springer, Cham (2015).
\newblock \doi{10.1007/978-3-319-24024-4\_17}

\bibitem{Prieto2016}
Prieto, A., Prieto, B., Ortigosa, E.M., Ros, E., Pelayo, F., Ortega, J., Rojas,
  I.: {Neural networks: An overview of early research, current frameworks and
  new challenges}.
\newblock Neurocomputing \textbf{214}, 242--268 (2016).
\newblock \doi{10.1016/j.neucom.2016.06.014}

\bibitem{Schmidhuber2015}
Schmidhuber, J.: {Deep learning in neural networks: An overview}.
\newblock Neural Networks \textbf{61}, 85--117 (2015).
\newblock \doi{10.1016/j.neunet.2014.09.003}

\bibitem{Sodhi2005}
Sodhi, M.: {LP modeling for asset-liability management: A survey of choices and
  simplifications}.
\newblock Operations Research \textbf{53}(2), 181--196 (2005).
\newblock \doi{10.1287/opre.1040.0185}

\bibitem{Sokolinskaya2018}
Sokolinskaya, I.: {Parallel Method of Pseudoprojection for Linear
  Inequalities}.
\newblock In: L.~Sokolinsky, M.~Zymbler (eds.) Parallel Computational
  Technologies. PCT 2018. Communications in Computer and Information Science,
  vol. 910, pp. 216--231. Springer, Cham (2018).
\newblock \doi{10.1007/978-3-319-99673-8\_16}

\bibitem{Sokolinskaya2017a}
Sokolinskaya, I., Sokolinsky, L.B.: {On the Solution of Linear Programming
  Problems in the Age of Big Data}.
\newblock In: L.~Sokolinsky, M.~Zymbler (eds.) Parallel Computational
  Technologies. PCT 2017. Communications in Computer and Information Science,
  vol. 753., pp. 86--100. Springer, Cham (2017).
\newblock \doi{10.1007/978-3-319-67035-5\_7}

\bibitem{Sokolinskaya2017b}
Sokolinskaya, I., Sokolinsky, L.B.: {Scalability Evaluation of NSLP Algorithm
  for Solving Non-Stationary Linear Programming Problems on Cluster Computing
  Systems}.
\newblock In: V.~Voevodin, S.~Sobolev (eds.) Supercomputing. RuSCDays 2017.
  Communications in Computer and Information Science, vol. 793, pp. 40--53.
  Springer, Cham (2017).
\newblock \doi{10.1007/978-3-319-71255-0\_4}

\bibitem{Sokolinskaya2018a}
Sokolinskaya, I.M., Sokolinsky, L.B.: {Scalability Evaluation of Cimmino
  Algorithm for Solving Linear Inequality Systems on Multiprocessors with
  Distributed Memory}.
\newblock Supercomputing Frontiers and Innovations \textbf{5}(2), 11--22
  (2018).
\newblock \doi{10.14529/jsfi180202}

\bibitem{Sokolinsky2018}
Sokolinsky, L.B.: {Analytical Estimation of the Scalability of Iterative
  Numerical Algorithms on Distributed Memory Multiprocessors}.
\newblock Lobachevskii Journal of Mathematics \textbf{39}(4), 571--575 (2018).
\newblock \doi{10.1134/S1995080218040121}

\bibitem{Sokolinsky2021}
Sokolinsky, L.B.: {BSF: A parallel computation model for scalability estimation
  of iterative numerical algorithms on cluster computing systems}.
\newblock Journal of Parallel and Distributed Computing \textbf{149}, 193--206
  (2021).
\newblock \doi{10.1016/j.jpdc.2020.12.009}

\bibitem{Sokolinsky2021a}
Sokolinsky, L.B.: {BSF-skeleton: A Template for Parallelization of Iterative
  Numerical Algorithms on Cluster Computing Systems}.
\newblock MethodsX \textbf{8}, Article number 101,437 (2021).
\newblock \doi{10.1016/j.mex.2021.101437}

\bibitem{Sokolinsky2020a}
Sokolinsky, L.B., Sokolinskaya, I.M.: {Scalable Method for Linear Optimization
  of Industrial Processes}.
\newblock In: Proceedings - 2020 Global Smart Industry Conference, GloSIC 2020,
  pp. 20--26. Article number 9267,854. IEEE (2020).
\newblock \doi{10.1109/GloSIC50886.2020.9267854}

\bibitem{Sokolinsky2020b}
Sokolinsky, L.B., Sokolinskaya, I.M.: {Scalable parallel algorithm for solving
  non-stationary systems of linear inequalities}.
\newblock Lobachevskii Journal of Mathematics \textbf{41}(8), 1571--1580
  (2020).
\newblock \doi{10.1134/S1995080220080181}

\bibitem{Sokolinsky2021b}
Sokolinsky, L.B., Sokolinskaya, I.M.: {FRaGenLP: A Generator of Random Linear
  Programming Problems for Cluster Computing Systems}.
\newblock In: L.~Sokolinsky, M.~Zymbler (eds.) Parallel Computational
  Technologies. PCT 2021. Communications in Computer and Information Science,
  vol. 1437, pp. 164--177. Springer, Cham (2021).
\newblock \doi{10.1007/978-3-030-81691-9\_12}

\bibitem{Sokolinsky2021d}
Sokolinsky, L.B., Sokolinskaya, I.M.: {VaLiPro: Linear Programming Validator
  for Cluster Computing Systems}.
\newblock Supercomputing Frontiers and Innovations \textbf{8}(3), 51--61
  (2021).
\newblock \doi{10.14529/js 210303}

\bibitem{Tolla2014}
Tolla, P.: {A Survey of Some Linear Programming Methods}.
\newblock In: V.T. Paschos (ed.) Concepts of Combinatorial Optimization, 2
  edn., chap.~7, pp. 157--188. John Wiley and Sons, Hoboken, NJ, USA (2014).
\newblock \doi{10.1002/9781119005216.ch7}

\bibitem{Zadeh1973}
Zadeh, N.: {A bad network problem for the simplex method and other minimum cost
  flow algorithms}.
\newblock Mathematical Programming \textbf{5}(1), 255--266 (1973).
\newblock \doi{10.1007/BF01580132}

\end{thebibliography}
\end{document}